\documentclass[a4paper,12pt]{amsart}

\usepackage{color}



\newtheorem{thm}{Theorem}

\begin{document}

\title{Compact homogeneous lcK manifolds are Vaisman}
\author{P. Gauduchon, A. Moroianu and L. Ornea}
\date{\today}
\maketitle
\begin{abstract} We prove that any compact homogeneous locally
  conformally K\"ahler manifold has parallel Lee form.
\end{abstract}
\bigskip

{\it Theorem \ref{thm-lck} 
 below has been claimed by K. Hasegawa and Y. Kamishima in
\cite{HK1} and \cite{HK2} and a partial result also appeared  in
\cite{MO}.  At the time of writing, it is not clear to us that the arguments presented in  \cite{HK1} and \cite{HK2} are complete. We here present a complete  simple proof of
this result. }
\bigskip

A Hermitian manifold $(M, g, J)$ is called {\it locally conformally
  K\"ahler} --- lcK for short --- if, in some neighbourhood of  any
  point of $M$,  the Hermitian  structure can be made K\"ahler by some
  conformal change of the metric. Equivalently, $(M, g, J)$ is lcK if
  there exists a {\it closed} real $1$-form $\theta$, called the {\it
    Lee form} of the Hermitian structure, such that the {\it K\"ahler form} 
$\omega := g (J
  \cdot, \cdot)$ satisfies:
\begin{equation} \label{lcK} d \omega = \theta \wedge \omega. 
\end{equation}
A lcK Hermitian structure is called {\it strictly}  lcK if the Lee form
$\theta$ is not identically zero,  and {\it Vaisman} if 
$\theta$ is a non-zero parallel $1$-form with respect to the Levi-Civita 
connection of the metric $g$. Equivalently, since $\theta$ is closed,  
a strictly lcK manifold is Vaisman if the {\it Lee vector field} $\xi = \theta
^{\sharp}$ is Killing, i.e. $\mathcal{L} _{\xi} g = 0$, where
$\mathcal{L} _{X}$ denote the Lie derivative along a vector field
$X$. In general,
the Lee vector field $\xi$ and the vector field 
$J \xi$ (sometimes called the {\it Reeb
  vector field}  of the lcK structure) are  neither Killing, nor holomorphic
(meaning that $\mathcal{L} _{\xi} J = 0$), but we always have
$\mathcal{L} _{J \xi} \omega = 0$, since 
$\mathcal{L} _{J \xi} \omega = - d \theta + \theta (J \xi) \,
\omega + \theta \wedge \theta = 0$. 
\smallskip

By a {\it compact homogeneous lcK manifold} we mean 
a compact, connected, strictly lcK manifold $(M, g, J, \omega)$,
equipped with a transitive  and effective left-action of 
a (compact, connected) Lie group
$G$, which preserves the whole Hermitian structure, i.e. the
Riemannian metric $g$, the (integrable) complex structure $J$
and the $2$-form $\omega$. We then have:
\begin{thm} \label{thm-lck} Any compact homogeneous lcK manifold $(M,
  g, J, \omega)$ is  Vaisman.
\end{thm}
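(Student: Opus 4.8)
The plan is to prove that the Lee form $\theta$ is parallel, which by the equivalent characterisation recalled above is precisely the Vaisman condition. I would proceed in three stages.

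\emph{Normalisation.} Since $G$ preserves $\omega$ and, whenever $\dim_{\mathbb C}M\ge 2$, the $1$-form $\theta$ is the \emph{unique} solution of $(\ref{lcK})$ (exterior multiplication by $\omega$ is injective on $1$-forms; the case $\dim_{\mathbb C}M=1$ is immediate), the form $\theta$ is $G$-invariant. In the Hodge decomposition $\theta=\theta_0+df$ the harmonic part $\theta_0$ is again $G$-invariant, hence so is $df$, hence $f$ may be averaged over $G$ to a $G$-invariant — and therefore constant — function; thus $\theta=\theta_0$ is harmonic, in particular $\delta\theta=0$. Moreover $|\theta|^2$ is $G$-invariant, hence a positive constant $c$. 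As $\theta$ is closed, $\nabla\theta$ is symmetric, so $\mathcal L_\xi g=2\nabla\theta$ and the Vaisman condition is equivalent to $\nabla\theta=0$; as $\delta\theta=0$, $\nabla\theta$ is trace-free; and from $0=X(|\theta|^2)=2(\nabla_X\theta)(\xi)$ and symmetry we get $\nabla_\xi\theta=0$, i.e. $\nabla_\xi\xi=0$. Finally, the Bochner formula for the harmonic form $\theta$ reads $0=\nabla^*\nabla\theta+\mathrm{Ric}(\xi,\cdot)$, and homogeneity makes all pointwise norms constant, so $|\nabla\theta|^2=-\mathrm{Ric}(\xi,\xi)$; it therefore suffices to prove $\mathrm{Ric}(\xi,\xi)\ge0$.

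\emph{Compactness of the flows.} Both $\xi$ and the Reeb field $J\xi$ are $G$-invariant vector fields, and a $G$-invariant vector field on $M=G/H$ is a fundamental vector field for the natural right action of the \emph{compact} group $N:=N_G(H)/H$; hence the flows of $\xi$ and of $J\xi$ are relatively compact, each generating a torus in $N$ that commutes with $G$. Both tori preserve $\theta$ (because $\mathcal L_\xi\theta=d(|\theta|^2)=0$, and $\mathcal L_{J\xi}\theta=0$ since $\theta(J\xi)=g(\xi,J\xi)=0$), and the torus generated by $J\xi$ also preserves $\omega$, by the identity $\mathcal L_{J\xi}\omega=0$ recalled in the introduction.

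\emph{Conclusion, and the main obstacle.} I would average $g$ over the torus $T$ generated by $J\xi$, simultaneously correcting $J$ by a polar decomposition of $\omega$ so as to keep a $G\times T$-invariant Hermitian structure with the \emph{same} fundamental form $\omega$ and the same Lee form $\theta$ — hence still lcK — in which $J\xi$ is a Killing field. Combined with $\mathcal L_{J\xi}\omega=0$ and with the identity $\mathcal L_{J\xi}J=J\circ\mathcal L_\xi J$ (valid for any integrable $J$, being a restatement of $N_J=0$), this forces the new Lee field to be holomorphic; averaging once more over the torus it generates — which by then preserves both the complex structure and $\theta$ — produces a homogeneous lcK structure whose Lee field is Killing, i.e. Vaisman, and for which $\mathrm{Ric}(\xi,\xi)=0$. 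The remaining, and I expect hardest, point is to descend this information to the \emph{original} metric: to show that the successive averagings do not actually move $g$, or equivalently — which by the Bochner identity of the first stage is all that is needed — to establish directly that $\mathrm{Ric}(\xi,\xi)\ge0$ for $g$ itself, for instance by comparing it with $\mathrm{Ric}(J\xi,J\xi)$, which once $J\xi$ is a Killing field of constant norm equals $|\nabla(J\xi)|^2\ge0$. Granting $\mathrm{Ric}(\xi,\xi)\ge0$, the Bochner identity yields $\nabla\theta=0$ and the theorem follows.
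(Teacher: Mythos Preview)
Your Stage~1 is correct and gives a clean reduction: on a compact homogeneous space $\theta$ is harmonic, $|\nabla\theta|^2=-\mathrm{Ric}(\xi,\xi)$ pointwise, and the theorem would follow from $\mathrm{Ric}(\xi,\xi)\ge 0$. But Stage~3 does not establish this, and the gap you flag is real. After averaging $g$ over the torus generated by $J\xi$ and polar-decomposing $\omega$, you obtain a \emph{new} Hermitian structure $(g'',J',\omega)$ with the same $\theta$ but a new Lee field $\xi''=\theta^{\sharp_{g''}}$ and a new Reeb field $J'\xi''$; there is no reason these agree with $\xi$ and $J\xi$, so the torus you averaged over is not, a priori, the Reeb torus of the new structure. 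Your second averaging therefore uses a \emph{different} torus, and nothing prevents the process from drifting indefinitely rather than fixing $g$. The fallback you propose --- comparing $\mathrm{Ric}(\xi,\xi)$ with $\mathrm{Ric}(J\xi,J\xi)=|\nabla(J\xi)|^2$ --- already assumes that $J\xi$ is Killing for the \emph{original} $g$, which is essentially the conclusion. So the argument is circular precisely at the point you identify as hardest, and no amount of soft averaging seems to close it.

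The paper's proof is of a completely different nature: it is finite-dimensional and Lie-algebraic, and never touches curvature. One works inside ${\sf Inv}\cong {\rm N}_{\mathfrak g}(\mathfrak g_x)/\mathfrak g_x$. Using the element ${\sf t}\in\mathfrak c$ with $\tilde\theta({\sf t})=1$ and setting $\psi=-\iota_T\omega$, one gets $\omega=d\psi-\theta\wedge\psi$, which on the one hand shows $T$ and $J\xi$ span a $2$-dimensional abelian subalgebra of ${\sf Inv}$, and on the other, via a centraliser computation for ${\sf w}_x=\sigma_x^*\big((JT)(x)\big)\in\mathfrak g_0=\ker\tilde\theta$, forces ${\sf rk}\,\mathfrak g_x={\sf rk}\,\mathfrak g-2$, hence ${\sf rk}\,{\sf Inv}=2$. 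Being of compact type with nontrivial centre, ${\sf Inv}$ is then $\mathbb R\oplus\mathbb R$ or $\mathfrak u_2$; in either case one obtains a linear relation $JT=a\,T+b\,J\xi$ with $b\neq 0$ (in the $\mathfrak u_2$ case this requires classifying the admissible complex structures on ${\sf Inv}$). Since $T\in\hat{\mathfrak g}$ preserves $J$ and $J$ is integrable, $JT$ preserves $J$; hence $J\xi=\tfrac1b(JT-aT)$ preserves $J$, and together with $\mathcal L_{J\xi}\omega=0$ this makes $J\xi$ Killing; then $\xi=\tfrac1b(T+a\,JT)$ is Killing as well. The decisive idea you are missing is this rank-two constraint on ${\sf Inv}$, which forces $JT$ into the span of $T$ and $J\xi$ and produces the Killing fields directly, with no averaging and no appeal to $\mathrm{Ric}$.
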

Before starting the proof of Theorem \ref{thm-lck}, we 
recall a number of general,
well-known facts, concerning a
compact homogeneous manifold $(M, g)$, equipped with a left-action, 
effective and transitive, of a 
(connected) compact Lie group $G$. Without loss of generality, we
assume that the stabilizer $G _x$ of any point $x$ of $M$ in $G$ is
connected. 
We denote by $\mathfrak{g}$ the Lie algebra of $G$ and we fix an ${\rm
  Ad} _G$-invariant positive definite inner product, $B$, on
$\mathfrak{g}$. The induced (infinitesimal) action of $\mathfrak{g}$
is an injective linear map $\sigma: {\sf a}
\mapsto \hat{\sf a}$ from $\mathfrak{g}$ to the space, ${\rm Vect} (M)$, of
(smooth) vector fields on $M$, defined by: 
\begin{equation} \label{inf-action} \hat{\sf a}   (x) 
= \frac{d}{dt} \Big\arrowvert_{t = 0}
  {\rm exp} (t \, {\sf a}) \cdot x, \end{equation}
for any ${\sf a}$ in $\mathfrak{g}$ and any $x$ in $M$, where ${\rm
  exp}:\mathfrak{g} \to G$ denotes the exponential map and $\cdot$ the
action of $G$ on $M$. Since the $G$-action on $M$ is a left-action,
$\sigma$ is an {\it anti-isomorphism} from $\mathfrak{g}$ to
$\hat{\mathfrak{g}}$,  equipped with the usual bracket of vector
fields: $[\hat{\sf a}, \hat{\sf b}] = - \widehat{[{\sf a}, {\sf b}]}$. 
\smallskip

We denote by $\textsf{Inv}$ the space  of $G$-invariant  vector fields
on $M$, which is a Lie subalgebra of ${\rm Vect} (M)$. Any element $Z$
of  $\textsf{Inv}$ commutes with all elements of
$\hat{\mathfrak{g}}$. In particular 
\begin{equation} {\sf Inv} \cap \hat{\mathfrak{g}} =
  \hat{\mathfrak{c}}, \end{equation} 
where $\hat{\mathfrak{c}}$ denotes the image in $\hat{\mathfrak{g}}$ of
the center, $\mathfrak{c}$, of $\mathfrak{g}$. 
\smallskip

For any $x$ in $M$, we denote by $\sigma _x$ the map ${\sf a} \mapsto
\hat{\sf a} (x)$, from $\mathfrak{g}$ to the tangent space $T _x M$
of $M$ at $x$, and by $\sigma _x ^*$ its metric adjoint,  from $(T _x M,
g _x)$ to $(\mathfrak{g}, B)$, so that:
\begin{equation} B (\sigma _x ^* (X), {\sf a}) = g _x (X, \sigma _x
  (\sf{a})), \end{equation}
for any $X$ in $T _x M$ and any ${\sf a}$ in $\mathfrak{g}$. The
kernel, $\mathfrak{g} _x$, of $\sigma _x$, is a Lie subalgebra of
$\mathfrak{g}$, namely the Lie algebra of the stabilizer, $G _x$, of
$x$ in $G$, whereas the image of $\sigma _x ^*$ in $\mathfrak{g}$ 
is the $B$-orthogonal
complement, $\mathfrak{g} _x ^{\perp}$, of $\mathfrak{g} _x$. 
The Lie
algebra $\mathfrak{g} _x$ acts on $T _x M$ by ${\sf a} \cdot X = [\tilde{X},  \hat{\sf a}] (x) = \left(D ^g _X \hat{\sf a}\right) (x)$ 
for any ${\sf a}$ in $\mathfrak{g} _x$ and any $X$ in 
$T _x M$, where $\tilde{X}$ here stands for any local vector field
around $x$ such that $\tilde{X} (x) = X$ and $D ^g$ denotes the
Levi-Civita connection of $g$.  We then have
\begin{equation} \label{gx-equi} 
\sigma _x [{\sf a}, {\sf b}] = {\sf a} \cdot \sigma _x
  ({\sf b}), \qquad \sigma _x ^* ({\sf a} \cdot X) = [{\sf a}, \sigma _x
  ^* (X)], \end{equation}
for any $\sf{a}$ in $\mathfrak{g} _x$, any ${\sf b}$ in $\mathfrak{g}$
and any $X$ in $T _x M$. 
\smallskip

For any chosen point $x$
in $M$, the evaluation map ${\sf ev} _x: Z \mapsto {\sf ev} _x
(Z) := Z (x)$, from 
${\sf Inv}$ to $T _x M$, is injective, and 
$Z (x)$ belongs to the space, denoted by $(T _x M) ^{\mathfrak{g} _x}$, of those
$X$ in $T _x M$ such that 
${\sf a} \cdot X = 0$ for any ${\sf a}$ in $\mathfrak{g}
_x$; conversely, any $X$ in $(T _x M) ^{\mathfrak{g} _x}$ is equal to
$Z (x)$ for a unique $Z$ in ${\sf Inv}$. Then, ${\sf ev} _x$ is a linear {\it
  isomorphism} from ${\sf Inv}$ to $(T _x M) ^{\mathfrak{g} _x}$,
whose inverse is denoted by ${\sf ev} _x ^{-1}$. 
On the other
hand, for any 
$X$ in $(T _x M) ^{\mathfrak{g} _x}$, 
we have $X = \hat{\sf b} (x)$ for some ${\sf b}$ in
$\mathfrak{g}$,  which is uniquely defined up to the addition of 
an element of
$\mathfrak{g} _x$ and is such that $[{\sf a}, {\sf b}]$ belongs to 
$\mathfrak{g} _x$ for
any ${\sf a}$ in $\mathfrak{g} _x$, meaning that $\sf{b}$ belongs to
the normalizer, ${\rm N} _{\mathfrak{g}} (\mathfrak{g} _x)$,  of $\mathfrak{g}
  _x$ in $\mathfrak{g}$; conversely, for any $\sf{b}$ in 
$N _{\mathfrak{g}} (\mathfrak{g} _x)$, $\hat{\sf b} (x)$ is the value at
  $x$ of a (unique) element of ${\sf Inv}$. For any chosen $x$ in $M$,
  we thus get a linear isomorphism
\begin{equation} \label{Inv-id} {\sf Inv} = {\rm N} _{\mathfrak{g}} 
(\mathfrak{g}
  _x)/\mathfrak{g} _x, \end{equation}
according to which ${\sf b} \mod{\mathfrak{g} _x}$ is identified with
${\sf ev} _x ^{-1} (\sigma _x ({\sf b}))$, for any ${\sf b}$ in 
${\rm N} _{\mathfrak{g}} (\mathfrak{g} _x)$.   This isomorphism is 
actually a Lie algebra isomorphism. 

\smallskip

In general, the
rank, ${\sf rk} \, \mathfrak{l}$, 
of a Lie algebra $\mathfrak{l}$ of compact type 
(= the Lie algebra of a compact Lie
group) can be defined as the dimension of a maximal abelian Lie
subalgebra. In particular, ${\sf rk} \, {\rm N} _{\mathfrak{g}} (\mathfrak{g}
_x) \leq  {\sf rk} \, \mathfrak{g}$ and it then follows from 
(\ref{Inv-id}) that 
\begin{equation} \label{rank} 
{\sf rk} \, {\sf Inv} \leq  {\sf rk} \, \mathfrak{g} - {\sf
    rk} \, \mathfrak{g} _x. \end{equation}

\smallskip

\begin{proof}[Proof of Theorem \ref{thm-lck}]
We now assume that the homogeneous Riemannian 
manifold $(M, g)$ is equipped with a compatible $G$-invariant 
lcK Hermitian structure as
explained above.  In particular, 
${\sf Inv}$ is 
$J$-invariant, since $J$ is $G$-invariant, and contains the Lee vector
field $\xi$ and $J \xi$ (notice however that ${\sf Inv}$ is 
{\it not} a priori a complex Lie algebra, as $J$ is not preserved 
in general by
the elements of ${\sf Inv}$). 
Since the $G$-action on $M$ preserves the K\"ahler form $\omega$, it
preserves $d \omega$ as well, hence also the Lee form $\theta$. Since,
moreover, $d \theta = 0$, it follows that $\theta (\hat{\sf a})$ is
{\it constant} and that $\theta ([\hat{\sf a}, \hat{\sf b}]) = 0$, for any 
$\hat{\sf a}, \hat{\sf b}$ in $\hat{\mathfrak{g}}$. Alternatively,
$\theta$ determines an element, $\tilde{\theta}$, of $\mathfrak{g}
^*$, 
defined by 
$\tilde{\theta} ({\sf a}) := \theta (\hat{\sf a})$, 
which vanishes on the derived Lie subalgebra
$[\mathfrak{g}, \mathfrak{g}]$. Being of compact type, $\mathfrak{g}$
splits as $\mathfrak{g} = \mathfrak{c} \oplus
  \mathfrak{s}$, where $\mathfrak{s} = [\mathfrak{g}, \mathfrak{g}]$
  is the semi-simple part of $\mathfrak{g}$. We infer the existence of
  ${\sf t}$ in $\mathfrak{c}$ such that $\tilde{\theta} ({\sf t}) =
  1$, whose image in 
$\hat{\mathfrak{c}} = {\sf
   Inv} \cap \hat{\mathfrak{g}}$ will be denoted by $T$, instead of
 $\hat{\sf t}$; we
 then have $\theta (T) = 1$.  
 Since $\theta$ is closed, the
 kernel, $\mathfrak{g} _0$, of $\tilde{\theta}$ in $\mathfrak{g}$ 
is a Lie subalgebra of
 $\mathfrak{g}$ and we get the following $B$-orthogonal decomposition 
\begin{equation} \mathfrak{g} = \langle {\sf t}  \rangle \, \oplus
      \mathfrak{g} _0, \end{equation} 
where $\langle {\sf t}  \rangle$ denotes the $1$-dimensional subspace
of $\mathfrak{g}$ generated by ${\sf t}$. 
The $1$-form $\psi$ defined by
\begin{equation} \label{psi} \psi   := - \iota _{T} \omega,
\end{equation}
is $G$-invariant, in particular $\mathcal{L} _T \psi = 0$, from which we
 infer, by using (\ref{lcK}):  $d \psi = \iota _T d \omega = 
 \theta (T) \, \omega + \theta \wedge \psi$, hence
\begin{equation} \label{omega-psi} \omega = d \psi - \theta \wedge
  \psi. \end{equation} 
Notice that (\ref{omega-psi}) implies that $\theta \wedge \psi \neq 0$ 
at each point of $M$, since $\omega$ is everywhere non-degenerate,
whereas $\iota _T d \psi = 0$; it follows that the $G$-invariant vector
fields $T, J \xi$ are independent at each point of $M$. Since $[T, J
\xi] = \mathcal{L} _T (J \xi) = 0$, $T$ and $J \xi$ generate a
$2$-dimensional abelian Lie subalgebra of ${\sf Inv}$, so that
\begin{equation} \label{rank-geq} 
{\sf rk} \, {\sf Inv} \geq 2. \end{equation}
\smallskip

From (\ref{omega-psi}), we readily infer 
that $\mathfrak{c}$ is either $1$-dimensional, generated
by $T$, or $2$-dimensional, generated by $T$ and by $J \xi$, which is
then an element of $\hat{\mathfrak{c}} = {\sf Inv} \cap
\hat{\mathfrak{g}}$. Indeed, suppose that $\mathfrak{c}$ is of
dimension greater than $1$. There then exists ${\sf v} \neq 0$ in 
$\mathfrak{c}$
such that $\tilde{\theta} ({\sf v}) = 0$. 
Again, we denote by $V$ its image in
$\hat{\mathfrak{c}} = {\sf Inv} \cap \hat{\mathfrak{g}}$, instead of
$\hat{\sf v}$. 
Since $\psi$ is $G$-invariant, we have
$\mathcal{L} _V \psi = 0$, so that, by (\ref{omega-psi}), $\iota _V
\omega = \psi (V) \, \theta$. This implies that $V$ is then equal to a
(non-zero) multiple of $J \xi$. 

\smallskip

Choose  any $x$ in $M$  and denote by ${\sf w}_x$ the element of $\mathfrak{g}$
defined by
\begin{equation}\label{W} {\sf w} _x =  \sigma _x ^* \big((J T) (x)\big), 
\end{equation}
so that $B ({\sf w}_x, {\sf a})   =  g _x (JT, \sigma _x ({\sf a})) = 
- \psi  (\sigma
_x ({\sf a}))= - \psi  (\hat{\sf a} (x))$, for any ${\sf a}$ in
${\mathfrak{g}}$, where $\psi$ is the $G$-invariant $1$-form defined
by (\ref{psi}). 
 In particular, ${\sf w}_x$ sits in $\mathfrak{g} _0$,
since $\psi (T) = 0$, and is $B$-orthogonal to $\mathfrak{g} _x$. 
Denote by $C _{\mathfrak{g} _0} ({\sf w}_x)$ the
commutator of ${\sf w}_x$ in $\mathfrak{g} _0$ and 
pick any ${\sf a}$ in $\mathfrak{g} _0$. Then, ${\sf a}$ belongs to 
$C _{\mathfrak{g} _0} ({\sf w}_x)$ if and only 
if $B ([{\sf w}_x, \sf{a}], \sf{b}) =
0$ for any $\sf{b}$ in $\mathfrak{g} _0$.  Since  
$B ([{\sf w}_x, {\sf a}], {\sf b}) = B ({\sf w}_x, [{\sf a}, {\sf b}]) 
=  \psi  ([\hat{\sf a}, \hat{\sf b}](x) )= 
 d \psi (\hat{\sf a} (x), \hat{\sf b}(x))$, this occurs
if and only if $\iota _{\hat{\sf a} (x)} d \psi = 0$, if and only if 
$\iota _{\hat{\sf a} (x)} \omega - \psi  (\hat{\sf a} (x)) \, \theta
(x) = 0$
(by using (\ref{omega-psi}) and $\theta (\hat{\sf a}) = 0$). We
eventually get that ${\sf a}$ belongs to $C _{\mathfrak{g} _0} ({\sf w}_x)$ if
and only if $\hat{\sf a} (x) = - \psi  (\hat{\sf a}(x)) \, (J \xi) (x)
=  B ({\sf w}_x, {\sf a}) \, (J \xi) (x)$. We
infer that $\sigma _x ({\sf w}_x) =  B ({\sf w}_x, {\sf w}_x) \, J
\xi (x)$ 
and that
\begin{equation} \label{Cw} C _{\mathfrak{g} _0} ({\sf w}_x) 
= \langle {\sf w}_x \rangle \,
  \oplus \mathfrak{g} _x, \end{equation}
where the sum is $B$-orthogonal.  Since ${\sf w}_x$ is contained in a
maximal abelian Lie subalgebra of $\mathfrak{g} _0$, which is
contained in $C _{\mathfrak{g} _0} ({\sf w}_x)$, we have: 
${\sf rk}\, C _{\mathfrak{g} _0} ({\sf w}_x) = {\sf rk} \, \mathfrak{g} _0
= {\sf rk} \, \mathfrak{g} - 1$. It then follows from (\ref{Cw})
that 
\begin{equation} \label{rkggx} 
{\sf rk} \, \mathfrak{g} _x = {\sf rk} \,
  \mathfrak{g} - 2. \end{equation}
By using  (\ref{rank}), we infer ${\sf rk} \, {\sf Inv} \leq 2$, hence,
by (\ref{rank-geq}):
\begin{equation} {\sf rk} \, {\sf Inv} = 2. \end{equation}

\smallskip

Being a Lie algebra of compact type of rank $2$, with non-trivial
center (since it contains $T$),  
${\sf Inv}$ is 
isomorphic either to the abelian Lie algebra $\mathbb{R} \oplus
\mathbb{R}$ 
or to the unitary Lie algebra $\mathfrak{u} _2 = 
\mathbb{R} \oplus \mathfrak{su} _2$. Notice that
the latter case can only occur if $\mathfrak{c} = \langle {\sf t} \rangle$,
since $\hat{\mathfrak{c}} \cong \mathfrak{c}$ is contained in ${\sf
  Inv}$, whereas the center of $\mathfrak{u} _2$ is of dimension $1$. 

\smallskip

\noindent {\bf Case 1}. 
If ${\sf Inv} \cong \mathbb{R} \oplus \mathbb{R}$, it is 
generated by $T$ and $J \xi$, which, as already observed, are
independent at each point of $M$.   
Since $J T$ belongs
to ${\sf Inv}$, we thus have 
\begin{equation} \label{JT} J T  = a \, T + b \, J \xi, \end{equation}
for some real numbers $a, b$, with $b \neq 0$. Now, $T$ preserves
$\omega$ and $J$, as do any vector field in $\hat{\mathfrak{g}}$, 
so $J T$ preserves $J$, since $J$ is integrable. It
follows that $J \xi = \frac{1}{b} (J T - a \, T)$ preserves $J$ as
well. We already observed that $J \xi$
preserves $\omega$ for {\it any} lcK structure: $J \xi$  is then a Killing
vector field with respect to $g$. It then follows from (\ref{JT}) that
$J T$ is a Killing vector field. Finally, $\xi = \frac{1}{b} (T + a \,
JT)$ is also a Killing vector field, meaning that 
the lcK structure is Vaisman. 

\smallskip

\noindent {\bf Case 2}. It remains to deal with the case when 
$\mathfrak{c} = \langle {\sf t}  \rangle$ is 
$1$-dimensional and ${\sf Inv} \cong 
\mathfrak{u} _2 = \mathbb{R}  \, \oplus \, \mathfrak{su}
_2$. For convenience, we normalize $B$ so that $B ({\sf t}, {\sf t}) =
1$ and $B
({\sf w}_x, {\sf w}_x) = 1$, where ${\sf w}_x$ has been defined by
(\ref{W}),  
so that $\sigma _x ({\sf w}_x) = 
 (J \xi) (x)$ (where $x$ is any chosen point in $M$). 
 Denote by $e _0$ a generator of
the center $\mathbb{R}$ of $\mathfrak{u} _2$, and by $e _1, e _2, e _3$
a triple of generators of $\mathfrak{su}_ 2$, such that $[e _2, e _3] = e
_1$, $[e _2, e _3] = e _1$, $[e _3, e _1] = e _2$. Via (\ref{Inv-id}),
we may identify $e _0$
with ${\sf t} \mod{\mathfrak{g} _x}$ and $e _1$, say, with ${\sf w}_x
  \mod{\mathfrak{g} _x}$, in order that the restriction of $\omega$ --- given 
by (\ref{omega-psi}) --- to ${\rm N}  _{\mathfrak{g}} (\mathfrak{g}
_x)/\mathfrak{g} _x$  coincide  with the standard form $\omega _0 = e _0
\wedge e _1 +  e _2 \wedge e _3$;  we here identify $e _0, e _1, e _2, e
_3$ with their $B$-duals in $\mathfrak{g} ^*$, so that $\theta = e
_0$, 
$\psi (x) = - e _1$ and $(d \psi) (x) = e _2 \wedge e _3$ (by
identifying $\theta$ with $\tilde{\theta}$ and, similarly, $\psi (x)$
with ${\sf a} \mapsto \psi (\hat{\sf a} (x))$ in
$\left(\mathfrak{g}/\mathfrak{g} _x\right) ^*$).  
It remains to determine the complex
structure $J$ of ${\sf Inv}$ in terms of the generators $e _0, e
_1, e _2, e _3$. Without loss of generality, the generators $X, Y$ of 
the corresponding $2$-dimensional complex space 
$\Theta ^{(0, 1)} _J$ of elements of type $(0, 1)$ in  
${\sf Inv}  \otimes \mathbb{C} 
= \mathbb{C}\,  e _0 \oplus  \mathbb{C} \, e _1 
\oplus \mathbb{C} \, e _2 \oplus \mathbb{C} \, e _3$ can be chosen of
the form
$X = e _0 + \sum _{i = 1} ^3 a _i \, e _i, \quad
 Y = \sum _{i = 1} ^3 b _i \, e _i$, 
where $a_i, b _i$ are complex numbers, which must
satisfy the following three conditions: 
\begin{enumerate}

\item[(i)]  $\Theta _J ^{(0, 1)}$ is $\omega$-{\it Lagrangian}, i.e.  
$\omega (X, Y) = 0$, 
where $\omega$ is extended to a $\mathbb{C}$-bilinear form on 
$(\mathbb{R} \oplus \mathfrak{su} _2) \otimes \mathbb{C}$; 
\smallskip

\item[(ii)] $\Theta _J ^{(0, 1)}$ is {\it involutive}, meaning that 
$[X, Y] = \lambda \, X + \mu \, Y$, 
for some complex numbers $\lambda, \mu$; 
\smallskip

\item[(iii)]  $g := \omega _0 (\cdot, J
\cdot)$ must be positive definite on $\mathbb{R} \oplus \mathfrak{su}
_2$, meaning that 
$i \, \omega (Z, \bar{Z}) > 0$, 
for any $Z$ in $\Theta ^{(0, 1)} _J$. 
\end{enumerate}
It is easily checked that the
first condition (i) is expressed by 
\begin{equation} \label{C1} b _1  +  a_2 \, b _3  - a _3  \, b _2 = 0, \end{equation}
whereas the integrability condition (ii) implies $\lambda = 0$ and is
then equivalent to the system
\begin{equation} \label{C2} \begin{split} 
& - \mu \, b _1  - a _3 \, b _2 + a _2  \, b _3 = 0, \\ & 
 a _3  \, b _1 - \mu \, b _2 - a _1  \, b _3 = 0, \\ &
a _1  \, b_2 - a _2 \, b _1 - \mu \, b _3 = 0. \end{split} \end{equation} 
If $b_1 \neq 0$, we infer from (\ref{C1}) that $\mu = -1$, so that,  the
system (\ref{C2}) has a non-trivial solution in $b_1, b_2, b_3$ if and only
if $a_1, a_2, a_3$ are related by $\sum _{i = 1} ^3 a _i ^2 + 1 = 0$,
the solution then being 
$b _1 = a_1 \, a_3 - a_2$, $b_2 = a_2 \, a_3 + a_1$,  $b_3  
= a _3 ^2 + 1$.
We thus get $Y = a_3 \, \sum _{i = 1} ^3 a _i \, e _i - a_2 \, e _1
+ a_1\, e_2 + e _3 = - a _3 \, e _0 - a_2 \, e _1 + a_1 \, e _2 +
e _3 \mod{X}$. It follows that $\Theta ^{(0, 1)} _J$ meets the first two
conditions (i) and (ii), with $b _1 \neq 0$, if and only it is generated
by $X, Y$ of the form
$X = e _0 + \sum _{i = 1} ^3 a _i \, e _i, \quad
 Y = - a _3 \, e _0  - a _2 \, e _1 + a _1 \, e _2 + e _3$, 
with $\sum _{i = 1} ^3 a _i ^2 + 1 = 0$. As for the positivity 
condition (iii), we easily compute
$\omega _0 (X, \bar{X}) = \bar{a}_1
  - a_1 + a _2 \, \bar{a}_3 - \bar{a}_2 \, a_3 =  - \omega _0 (Y, \bar{Y})$, 
which shows that  (iii) is actually {\it never} 
satisfied if $b_1 \neq 0$. 
We thus have $b_1 = 0$, which, by (\ref{C1}),  implies
$a_2 b _3 - a _3 b _2 = 0$, 
so that $Y = b _2 \, e _2 + b_3 \, e _3$, whereas $X = e _0 + a_1 \, e
_1 \mod{Y}$; by
changing the notation,  
$\Theta ^{(0, 1)} _J$ is then generated by
$X := e _0 + a _1 \, e _1$ and $Y := b _2 \, e _2 + b _3 \, e
  _3$. 
Moreover, since $[X, Y] = a _1 \, (- b_3 \, e _2 + b _2 \, e _3)$, 
the integrability condition (ii) is satisfied if  and
only if $b _2 = k \, b_3$ and $b_3 = - k \, b_2$, for some complex number $k$,
which must be equal to $\pm i$. If $k = i$, we have 
$Y = e _2 - i \, e _3$,  hence $i \, \omega _0 (Y, \bar{Y}) = - 2$,
which is negative. We 
thus have $k = - i$, hence $Y = e _2 +  i \, e _3$,  whereas 
$i \, \omega _0 (X, \bar{X}) = 2 \, \mathfrak{Im}{(a)}$, by setting $a
_1 = a$, 
so that  $\mathfrak{Im}{(a)} > 0$. 
The only suitable complex structures
$J$ on 
$\mathbb{R} \oplus \mathfrak{su} _2$  are therefore of the form 
\begin{equation} \begin{split} & 
J e _0 = \frac{\mathfrak{Re}{(a)}}{\mathfrak{Im}{(a)}} \,
  e _0 + \frac{|a| ^2}{\mathfrak{Im}{(a)}} \, e _1, \quad J e
    _1 = - \frac{1}{\mathfrak{Im}{(a)}} \, e _0 -
    \frac{\mathfrak{Re}{(a)}}{\mathfrak{Im}{(a)}} \, e _1, \\
& J e _2 = e _3, \qquad J e _3 = - e _2, \end{split} \end{equation}
with $\mathfrak{Im}{(a)} > 0$. 
Since $e _0 = {\sf t} \mod{\mathfrak{g} _x}$ and 
$e _1 =  {\sf w}_x \mod{\mathfrak{g} _x}$ represent  $T$ and $J \xi$
respectively in ${\sf
  Inv}$, via (\ref{Inv-id}), it follows that
(\ref{JT}) 
is again  satisfied in
${\sf Inv}$, with $b = \frac{|a|^2}{\mathfrak{Im}{(a)}} > 0$; 
we then conclude as before that $\xi$ is Killing, i.e. that 
the lcK structure is Vaisman. 
\end{proof}

\end{document}